\definecolor{forestgreen}{rgb}{0.13,0.54,0.13}
\begin{document}

\title*{The dihedral Dunkl--Dirac symmetry algebra with negative Clifford signature }

\author{Alexis Langlois-R\'emillard } 

\institute{Alexis Langlois-R\'emillard \at Department of Applied Mathematics, Computer Science and Statistics, Faculty of Sciences, Ghent University, Krijgslaan 281-S9, 9000 Gent, Belgium \newline \email{Alexis.LangloisRemillard@UGent.be}}
\maketitle

\abstract{The Dunkl--Dirac symmetry algebra is an associative subalgebra of the tensor product of a Clifford algebra and the faithful polynomial representation of a rational Cherednik algebra. In previous work, the finite-dimensional representations of the Dunkl--Dirac symmetry algebra in three dimensions linked with a dihedral group were given. We give here the necessary results to proceed to the same construction when the Clifford algebra in the tensor product has negative signature.}

\section{Introduction}
\label{sec:1}

Dunkl operators~\cite{Dunkl} generalise partial derivatives by introducing terms related to a reflection group $W \subset \mathcal{O}(N)$, its associated root system $R$, and a function $\kappa  :R \to \mathbb{C}$ invariant on the $W$-orbits. Together with the multiplication operators and the group algebra $\mathbb{C}[W]$, they generate an associative algebra $\mathcal{A}_{\kappa}$ that is the faithful polynomial representation of a rational Cherednik algebra~\cite{Etingof}.  Given a Clifford algebra $Cl(N)$, there is an $\mathfrak{osp}(1|2)$-realisation inside the tensor product $\mathcal{A}_{\kappa}\otimes Cl(N)$ generated by the Dunkl--Dirac operator obtained by changing the partial derivatives by Dunkl operators and its dual symbol. The symmetry algebra $\mathfrak{SA}$ linked to a family of $\mathfrak{osp}(1|2)$-realisations containing the Dunkl realisation mentioned was characterised abstractly in~\cite{Oste}, and it was shown in~\cite{Oste4} that it is the full $\mathfrak{osp}(1|2)$-supercentraliser. The representation theory of these algebras is only known for a few specific cases.

In a recent article~\cite{DBLROVdJ}, we constructed the finite-dimensional representations of the dihedral Dunkl--Dirac symmetry algebra $\mathfrak{SA}_{m}\subset \mathcal{A}_{\kappa}\otimes Cl(3)$, that is, the symmetry algebra of the $\mathfrak{osp}(1|2)$-realisation linked to the group $W = \mathbb{Z}_2\times D_{2m}$ acting on the three-dimensional Euclidean space. A pair of ladder operators behaving nicely under the action of the double covering $\widetilde{W}$ of the group $W$ was instrumental to this. As the construction was rather involved, only the case when the Clifford algebra had positive signature was considered, that is when the generators $e_1$, $e_2$, $e_3$ square to $1$. The goal of this short contribution is to give the needed results to proceed to the same construction in the case when the Clifford algebra has negative signature, that is $e_1$, $e_2$, $e_3$ square to $-1$. To help compare, the sign introduced is given as $\varepsilon \in \{-1,+1\}$. We study thus here the algebra $\mathfrak{SA}_{m}^{\varepsilon} \subset\mathcal{A}_{\kappa}\otimes Cl^{\varepsilon}(3)$. The complete classification of the finite-dimensional representations is long and would greatly exceed the allowed space, we refer the readers to~\cite{DBLROVdJ} for its details. We believe this contribution could be of help for interested readers who want to translate our results, since both conventions for Clifford algebras coexist and the two lead to non-isomorphic real Clifford algebras;  multiplication of the generators by $i$ gives the correspondence for complex Clifford algebras.

 In Section~\ref{sec:2} we present the general result needed for the construction. Proposition~\ref{prop:commrel} gives the commutation relations respected by the algebra, where a small sign change appears. Proposition~\ref{prop:O123sq} compares the Casimir of the $\mathfrak{osp}(1|2)$ superalgebra with central elements of $\mathfrak{SA}_{m}^{\varepsilon}$, and two signs appear. As a consequence, the factorisation of the ladder operators changes slightly as shown in Proposition~\ref{prop:lad}. The remaining steps of the construction of the finite-dimensional representations are then presented in Section~\ref{sec:3}.

%%%% SECTION 2%%%% 
\section{The dihedral Dunkl--Dirac symmetry algebra}
\label{sec:2}
In this section we present the necessary definitions and results on the dihedral Dunkl--Dirac symmetry algebra. We refer the readers to~\cite[Sec.~2 and 3]{DBLROVdJ} for more details, bearing in mind that $\varepsilon =+1$ there.

We consider the Euclidean space $\mathbb{R}^3$ with coordinate vectors $\xi_1,\xi_2,\xi_3$ and its canonical bilinear form $\langle -,-\rangle$. Let $W= \mathbb{Z}_2 \times D_{2m}$. Its root system $R$ is  
\begin{equation}\label{eq:root}
	R= \{ \alpha_0:=(0,0,1), -\alpha_0, \alpha_j:=(\sin (j\pi/m), - \cos(j\pi/m), 0) \mid 1\leq j \leq 2m\}. 
\end{equation}
The positive root system is $R_+ = \{ \alpha_0, \alpha_1,\dots,\alpha_m\}$ and the simple roots are given by $\alpha_0$, $\alpha_1$ and $\alpha_m$. The related reflections $\sigma_{\alpha}(x) := x- 2\langle x,\alpha\rangle/\langle \alpha, \alpha\rangle$ are given in matrix form by
\begin{equation}\label{eq:refl}
	\sigma_0 := \sigma_{\alpha_0} = \begin{pmatrix}
	            	1&0&0\\
	            	0&1&0\\
	            	0&0&-1
	            \end{pmatrix},
            \quad
    \sigma_{j} := \sigma_{\alpha_j}= \begin{pmatrix}
                  	\cos(2j\pi/m)&\sin(2j\pi/m)&0\\
                  	-\sin(2j\pi/m)&\cos(2j\pi/m)&0\\
                  	0&0&1
                  \end{pmatrix}.
\end{equation}
Let $\kappa: R \to \mathbb{C}$ be a function invariant on the $W$-orbits. The Dunkl operators are
\begin{equation}
	\mathcal{D}_jf(x) := \partial_{x_j}f(x) + \sum_{\alpha\in R^+} \kappa(\alpha)\frac{f(x) -f(\sigma_{\alpha} (x))}{\left\langle x,\alpha\right\rangle} \left\langle \alpha,\xi_j\right\rangle.
\end{equation}
%ALR: Dunkl operators for the group.
%For example, 
%\begin{align}
%	\mathcal{D}_1f(x) &= \partial_{x_1}f(x) + \sum_{j=1}^m \kappa(\alpha)\frac{\sin(j\pi/m)(f(x) - f(\sigma_j(x)))}{\sin(j\pi/m)x_1 -\cos(j\pi/m)x_2};\\
%	\mathcal{D}_2f(x) &= \partial_{x_1}f(x) - \sum_{j=1}^m \kappa(\alpha)\frac{\cos(j\pi/m)(f(x) - f(\sigma_j(x)))}{\sin(j\pi/m)x_1 -\cos(j\pi/m)x_2};\\
%	\mathcal{D}_3`f(x) &= \partial_{x_3}f + \kappa_0\frac{f(x) - f(\sigma_0x)}{x_3}.
%\end{align}
The Dunkl operators, the group algebra $\mathbb{C}[W]$ and the multiplication operators generate a faithful representation denoted $\mathcal{A}_{\kappa}$ of a rational Cherednik algebra.

Let $\varepsilon\in \{-1,+1\}$ be a sign and $Cl^{\varepsilon}(3)$ be the Clifford algebra generated by the three anticommuting elements $e_1$, $e_2$, $e_3$ subject to
\begin{equation}
	\{e_j,e_k\} = 2\varepsilon\delta_{ij}.
\end{equation}

There is an $\mathfrak{osp}(1|2)$-realisation given by the Dunkl--Dirac operator $\underline{\mathcal{D}}$ and its dual symbol $\underline{x}$ in the tensor product $\mathcal{A}_{\kappa}\otimes Cl^{\varepsilon}(3)$:
\begin{equation}
\underline{\mathcal{D}} = \mathcal{D}_1e_1 + \mathcal{D}_2e_2 + \mathcal{D}_3e_3, \qquad \underline{x} = x_1e_1 + x_2e_2 + x_3e_3.	
\end{equation}

We are interested in the elements of $\mathcal{A}_{\kappa}\otimes Cl(3)$ supercommuting with the $\mathfrak{osp}(1|2)$-realisation,  obtained in previous work~\cite{Oste}. First, the following elements in $W\otimes Cl^{\varepsilon}(3)$ anticommute with $\underline{\mathcal{D}}$ and $\underline{x}$:
\begin{equation}\label{eq:dcover}
	\widetilde{\sigma}_{\alpha} = \sigma_{\alpha} \otimes \sum_{j=1}^3 \langle \alpha, \xi_j\rangle e_j.
\end{equation}
They generate a group that is isomorphic to either one of the two possible central extensions of $W$ denoted by $\widetilde{W}^{\varepsilon}$~\cite{Morris}. The simple roots become $\widetilde{\sigma}_0:= \widetilde{\sigma}_{\alpha_0}$, $\widetilde{\sigma}_1 := \widetilde{\sigma}_{\alpha_1}$, $\widetilde{\sigma}_m := \widetilde{\sigma}_{\alpha_1}$ and they respect the following relations depending on the parity of $m$ and the value of $\varepsilon$:
\begin{equation}
	\widetilde{\sigma}_j^2 = \varepsilon,\quad (\widetilde{\sigma}_0\widetilde{\sigma}_m)^2 = -1,\quad (\widetilde{\sigma}_1\widetilde{\sigma}_m)^m = (-1)^{m+1}\varepsilon^m.
\end{equation}

The following linear combinations, called \emph{one-index symmetries}, of $\mathbb{C}[W]\otimes Cl^{\varepsilon}(3)$ are distinguished:
\begin{equation}\label{eq:Oj}
	O_j = \sum_{k=0}^m \kappa(\alpha_k) \langle \alpha_k,\xi_j\rangle \widetilde{\sigma}_{\alpha_{k}} = \frac{\varepsilon}{2} ([\underline{\mathcal{D}},x_i] - e_i) = \frac{\varepsilon}{2} \left( \sum_{k=1} e_k [\mathcal{D}_k,x_j] - e_j\right).
\end{equation}
Defining $L_{ij} := x_i\mathcal{D}_j - x_j\mathcal{D}_i$, the following elements, named the \emph{two-index symmetries}, commute with $\underline{\mathcal{D}}$ and $\underline{x}$
\begin{align}
	O_{ij} &:= L_{ij} + \frac{\varepsilon}{2} e_ie_j + O_ie_j - O_je_i ,\label{eq:Oijl}\\
	&= L_{ij} + \frac{\varepsilon}{2} e_ie_j + e_iO_j - e_jO_i.\label{eq:Oijr}
\end{align}
The final symmetry is named \emph{three-index symmetry} and is given by
\begin{align}
	O_{123} &= -\frac{\varepsilon}{2}e_1e_2e_3 - O_1e_2e_3 - O_2e_3e_1 - O_3 e_1e_2 + O_{12}e_3 + O_{31}e_2 + O_{23}e_1, \label{eq:O123l}\\
	 &= -\frac{\varepsilon}{2}e_1e_2e_3 - e_2e_3O_1 - e_3e_1O_2 - e_1e_2O_3 + e_3O_{12} + e_2O_{31} + e_1O_{23}. \label{eq:O123r}
\end{align}

\begin{definition}
	The dihedral \emph{Dunkl--Dirac symmetry algebra} $\mathfrak{SA}_{m}^{\varepsilon}$ is the  associative subalgebra of $\mathcal{A}_{\kappa}\otimes Cl^{\varepsilon}(3)$ generated by $O_{12}$, $O_{31}$, $O_{23}$, $O_{123}$ and the group algebra $\mathbb{C}[\widetilde{W}^{\varepsilon}]$.
\end{definition}
It is the full centraliser of the $\mathfrak{osp}(1|2)$-realisation~\cite{Oste4}.

\begin{proposition}\label{prop:commrel}
	The element $O_{123}$ commutes with every element of $\mathfrak{SA}_m^{\varepsilon}$; the two-index symmetries respect
	\begin{equation}\label{eq:Oijcomm}
		\begin{aligned}
			{[O_{12},O_{31}]} &= O_{23} + 2O_1O_{123} + \varepsilon [O_2,O_3],\\
			[O_{23},O_{12}] &= O_{31} + 2 O_{2}O_{123} + \varepsilon[O_{3},O_{1}],\\
			[O_{31},O_{23}] &= O_{12} + 2 O_3O_{123} + \varepsilon [O_1,O_2],
		\end{aligned}
	\end{equation}
	and the elements of $\widetilde{W}^{\varepsilon}$ interact as
	\begin{equation}\label{eq:commWO}
		\begin{aligned}
			\widetilde{\sigma}_0 O_{12} &= O_{12} \widetilde{\sigma}_0, &	\widetilde{\sigma}_j O_{12} &= - O_{12}\widetilde{\sigma}_j, \\
			\widetilde{\sigma}_0 O_{31} &= -O_{31} \widetilde{\sigma}_0,  &\widetilde{\sigma}_j O_{31} &= (\cos(2j\pi/m) O_{31} + \sin (2j\pi /m) O_{23})\widetilde{\sigma}_j,  \\ 
			\widetilde{\sigma}_0 O_{23} &= -O_{23} \widetilde{\sigma}_0, & \widetilde{\sigma}_j O_{23} &= (- \cos(2j\pi/m)O_{31} + \sin(2j\pi/m) O_{23})\widetilde{\sigma}_j.
		\end{aligned}
	\end{equation}
\end{proposition}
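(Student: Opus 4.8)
The plan is to establish the three assertions by direct computation in the associative algebra $\mathcal{A}_\kappa\otimes Cl^\varepsilon(3)$, carrying the sign $\varepsilon$ as a bookkeeping parameter throughout. Since the analogous identities are proved for $\varepsilon=+1$ in \cite{DBLROVdJ}, the substance of the argument is to re-run those manipulations while replacing $\{e_j,e_k\}=2\delta_{jk}$ by $\{e_j,e_k\}=2\varepsilon\delta_{jk}$; in particular $e_j^2=\varepsilon$, so every reordering of a repeated Clifford generator, together with the relation $\widetilde\sigma_\alpha^2=\varepsilon$, introduces a factor of $\varepsilon$ that must be tracked. A useful device is that each of $O_{ij}$ and $O_{123}$ is available both in a left form \eqref{eq:Oijl}, \eqref{eq:O123l} and a right form \eqref{eq:Oijr}, \eqref{eq:O123r}; when commuting past a Clifford element one chooses whichever presentation moves the scalar Dunkl part and the Clifford part through most cleanly.

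For the commutation relations \eqref{eq:Oijcomm} I would substitute the definition $O_{ij}=L_{ij}+\tfrac{\varepsilon}{2}e_ie_j+O_ie_j-O_je_i$ and expand $[O_{12},O_{31}]$, the other two relations following by the cyclic symmetry $1\to2\to3\to1$. The computation splits into the purely angular part $[L_{12},L_{31}]$, which produces $L_{23}$ together with Dunkl correction terms, the mixed terms pairing an $L$ with a Clifford or one-index factor, and the purely group-algebra--Clifford terms. The explicit $\tfrac{\varepsilon}{2}e_ie_j$ summand and the reorderings governed by $e_j^2=\varepsilon$ are what convert the $[O_2,O_3]$ contribution of the $\varepsilon=+1$ case into the coefficient $\varepsilon$ seen on the right-hand side, while the $O_{23}$ and $2O_1O_{123}$ terms reassemble unchanged. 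Here I would lean on the one-index relations between the $O_i$, the $L_{ij}$ and the $e_k$ coming from \eqref{eq:Oj}, and on the commutators of the $O_i$ viewed as elements of $\mathbb{C}[W]\otimes Cl^\varepsilon(3)$.

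For the action \eqref{eq:commWO} of $\widetilde{W}^\varepsilon$ I would compute the conjugation $\widetilde\sigma_\alpha O_{ij}\widetilde\sigma_\alpha^{-1}$ and then restore the stated form using $\widetilde\sigma_\alpha^{-1}=\varepsilon\widetilde\sigma_\alpha$. The reflection $\sigma_\alpha$ acts on the scalar part $L_{ij}$ as the induced transformation on the antisymmetric square of the vector representation, while the Clifford factor $\sum_k\langle\alpha,\xi_k\rangle e_k$ conjugates the $e_k$ and the one-index symmetries $O_k$ by the same orthogonal transformation. The key observation is that the normalisation factor $\varepsilon$ coming from $\widetilde\sigma_\alpha^2=\varepsilon$ cancels against the $\varepsilon$ produced by conjugating the Clifford part, so that \eqref{eq:commWO} carries no explicit $\varepsilon$ and reproduces exactly the $\varepsilon=+1$ relations; verifying this cancellation is the only place where care with the signature is essential here.

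Finally, the centrality of $O_{123}$ reduces, since $\mathfrak{SA}_m^\varepsilon$ is generated by $O_{12},O_{31},O_{23},O_{123}$ and $\mathbb{C}[\widetilde{W}^\varepsilon]$, to checking that $O_{123}$ commutes with each of these generators. Commutation with $\mathbb{C}[\widetilde{W}^\varepsilon]$ follows because $O_{123}$ is the totally antisymmetric pseudoscalar combination of the symmetries and is left invariant by the conjugation computed above; commutation with the $O_{ij}$ is handled by expanding $[O_{123},O_{ij}]$ with the left and right forms \eqref{eq:O123l}--\eqref{eq:O123r} and cancelling, alternatively by invoking that $\mathfrak{SA}_m^\varepsilon$ is the full $\mathfrak{osp}(1|2)$-supercentraliser of \cite{Oste4}. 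I expect the main obstacle to be the sign bookkeeping in \eqref{eq:Oijcomm}: because $\varepsilon$ enters both explicitly through $\tfrac{\varepsilon}{2}e_ie_j$ and implicitly through $e_j^2=\varepsilon$ and $\widetilde\sigma_\alpha^2=\varepsilon$, one must verify that it survives precisely as the coefficient of $[O_i,O_j]$ and cancels everywhere else.
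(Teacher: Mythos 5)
Your overall strategy---direct verification in $\mathcal{A}_\kappa\otimes Cl^{\varepsilon}(3)$ with $\varepsilon$ carried as a bookkeeping parameter---is sound, and for the group-action relations \eqref{eq:commWO} it essentially coincides with the paper's proof: the paper likewise reduces the question to how the bare reflections $\sigma_k$ move past the $L_{ij}$ (observing that, by \eqref{eq:Oijl} and \eqref{eq:dcover}, the Clifford parts transform compatibly, so only $\widetilde{\sigma}_jO_{12}$ needs an actual computation). Your conjugation argument is the same computation phrased multiplicatively, and your observation that the $\varepsilon$ from $\widetilde{\sigma}_\alpha^{-1}=\varepsilon\widetilde{\sigma}_\alpha$ cancels against the $\varepsilon$ produced by reordering Clifford generators is precisely why no explicit $\varepsilon$ survives in \eqref{eq:commWO}. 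Where you genuinely diverge is on the commutation relations \eqref{eq:Oijcomm} and the centrality of $O_{123}$: the paper does not re-derive these at all, but cites \cite[Thm~3.12]{Oste}, where the heavy algebra was already done. Your plan to expand $[O_{12},O_{31}]$ from \eqref{eq:Oijl} is the brute-force version of that cited proof; it would work, but it presupposes a stack of auxiliary identities (the commutators $[L_{ij},L_{kl}]$, which do \emph{not} close into $\mathfrak{so}(3)$ in the Dunkl setting, the relations $[L_{ij},O_k]$, and the interplay of the $O_k$ with the $e_j$) that you mention only in passing and that constitute most of the labour. What your route buys is self-containedness; what the citation buys is a two-line proof.

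One concrete flaw: your fallback argument for $[O_{123},O_{ij}]=0$---invoking that $\mathfrak{SA}_m^{\varepsilon}$ is the full $\mathfrak{osp}(1|2)$-supercentraliser of \cite{Oste4}---is a non sequitur. Membership in the supercentraliser only says that $O_{123}$ and the $O_{ij}$ each supercommute with the $\mathfrak{osp}(1|2)$-realisation; it says nothing about their commutators with one another. Indeed, the $O_{ij}$ all lie in the supercentraliser and do not commute among themselves, as \eqref{eq:Oijcomm} itself shows. Keep your primary argument (direct expansion using the left and right forms \eqref{eq:O123l}--\eqref{eq:O123r}), or cite \cite[Thm~3.12]{Oste}, where centrality is established.
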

\begin{proof}
The relations~\eqref{eq:Oijcomm} come from~\cite[Thm~3.12]{Oste}.  For~\eqref{eq:commWO}, remark that it is equivalent to consider $\sigma_k L_{ij}$ by the definition~\eqref{eq:Oijl} of $O_{ij}$ and that~\eqref{eq:dcover} of $\widetilde{\sigma_j}$. Then only $\widetilde{\sigma}_jO_{12}$ is not direct, and we get:
\begin{align*}
\sigma_j L_{12} &= \sigma_j (x_1\mathcal{D}_2 - x_2\mathcal{D}_1) = (\sin^2(2j\pi /m) + \cos^2(2j\pi/m)) L_{21} \sigma_j = - L_{12}\sigma_j. 
\end{align*}
Working out the remaining terms of \eqref{eq:Oijl} gives the rest. \hfill \qed
\end{proof}

We are interested in the representation theory of $\mathfrak{SA}_m^{\varepsilon}$. The construction uses ladder operators, and their factorisations in turn follow from  the next proposition.

\begin{proposition}\label{prop:O123sq}
	The three-index symmetry squares to
	\begin{equation}
		O_{123}^2 = -\frac{\varepsilon}{4} + O_1^2 + O_2^2 + O_3^2 + \varepsilon(O_{12}^2 + O_{31}^2 + O_{23}^2).
	\end{equation}
\end{proposition}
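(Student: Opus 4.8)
The plan is to reduce the computation by means of the pseudoscalar $\gamma := e_1e_2e_3$. In odd dimension $\gamma$ is central in the whole algebra $\mathcal{A}_{\kappa}\otimes Cl^{\varepsilon}(3)$ (it commutes with each $e_k$, and the two tensor factors commute), and a one-line Clifford computation gives $\gamma^2 = -\varepsilon$, so $\gamma$ is invertible with $\gamma^{-2}=-\varepsilon$. Since $\gamma$ commutes with $O_{123}$,
\begin{equation*}
	O_{123}^2 = \gamma^{-2}(\gamma O_{123})^2 = -\varepsilon\,(\gamma O_{123})^2,
\end{equation*}
and it is enough to handle $T := \gamma O_{123}$. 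Multiplying \eqref{eq:O123l} by $\gamma$ and pushing it past the central group-algebra coefficients, the relations $\gamma e_ie_j = -e_k$ and $\gamma e_k = \varepsilon\,e_ie_j$ (for $(i,j,k)$ a cyclic permutation of $(1,2,3)$) turn the trivector term into the scalar $\tfrac12$ and rewrite the rest, giving
\begin{equation*}
	T = \tfrac12 + \bigl(O_1e_1 + O_2e_2 + O_3e_3\bigr) + \varepsilon\bigl(O_{12}e_1e_2 + O_{31}e_3e_1 + O_{23}e_2e_3\bigr).
\end{equation*}
Already the scalar $\tfrac12$ produces, via $-\varepsilon(\tfrac12)^2$, the constant $-\tfrac{\varepsilon}{4}$ of the statement.

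It then remains to square $T$, which reduces the claim to the identity $T^2 = \tfrac14 - \varepsilon\,(O_1^2+O_2^2+O_3^2) - (O_{12}^2+O_{31}^2+O_{23}^2)$. To minimise reorderings I would multiply the displayed form of $T$ by its mirror image, namely the form in which all Clifford generators stand to the \emph{left} of the symmetries, obtained the same way from \eqref{eq:O123r}; then in every product the Clifford generators are adjacent and collapse at once through $e_k^2 = \varepsilon$, $e_ie_j = -e_je_i$ and $(e_ie_j)^2 = -1$. After this reduction the sought terms $-\varepsilon\,(O_1^2+O_2^2+O_3^2)$ and $-(O_{12}^2+O_{31}^2+O_{23}^2)$ are produced, while all remaining products must be shown to vanish; the relations \eqref{eq:Oijcomm} together with the commutations of the $O_j$ past the $O_{ij}$ serve to bring the mixed products into a common normal form so that the cancellation becomes visible.

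The hard part is precisely this cancellation of cross terms, and the dihedral geometry enters in an essential way because the construction is \emph{not} symmetric in the three indices: $O_1$ and $O_2$ carry Clifford weight in the $\{e_1,e_2\}$-plane, whereas $O_3$ is proportional to $e_3$. Hence the $O_j$ and the $O_{ij}$ do not all (anti)commute uniformly with the generators $e_k$, some of the would-be diagonal products are contaminated by stray bivector pieces, and the manifestly cyclic right-hand side only re-emerges once these contaminations annihilate the genuinely mixed contributions. Carrying this out while tracking every factor of $\varepsilon$ — and in particular checking that the surviving constant is $-\tfrac{\varepsilon}{4}$ and not $+\tfrac{\varepsilon}{4}$ — is the delicate and most error-prone step; conceptually it reflects that $T$ is, up to normalisation, tied to the $\mathfrak{osp}(1|2)$ Casimir, whose square is the relevant central element of $\mathfrak{SA}_m^{\varepsilon}$.
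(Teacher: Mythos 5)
Your pseudoscalar reduction is correct and is a genuinely tidy way to set up the computation: $\gamma := e_1e_2e_3$ is central in $\mathcal{A}_{\kappa}\otimes Cl^{\varepsilon}(3)$, $\gamma^2=-\varepsilon$, and your formula $T=\gamma O_{123} = \tfrac12 + O_1e_1+O_2e_2+O_3e_3 + \varepsilon(O_{12}e_1e_2+O_{31}e_3e_1+O_{23}e_2e_3)$ checks out; squaring $T$ is the paper's product of \eqref{eq:O123l} with \eqref{eq:O123r} up to the central factor $\gamma^2$, so nothing is lost or gained there. The genuine gap is in what you claim that square looks like. The bivector diagonal does come out right, $\varepsilon^2\,(O_{ij}e_ie_j)(e_ie_jO_{ij}) = -O_{ij}^2$, but the vector diagonal does not: $(O_je_j)(e_jO_j)=\varepsilon O_j^2$, i.e.\ the \emph{opposite} sign to the $-\varepsilon O_j^2$ your target $T^2=\tfrac14-\varepsilon(O_1^2+O_2^2+O_3^2)-(O_{12}^2+O_{31}^2+O_{23}^2)$ requires. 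Consequently the cross terms cannot all vanish; they must sum to $-2\varepsilon(O_1^2+O_2^2+O_3^2)$. This is exactly the content of the paper's proof: in its untwisted normalisation the $42$ cross terms sum to $Q=2(O_1^2+O_2^2+O_3^2)$, which is what flips $-O_j^2$ into $+O_j^2$. So your programme ``the diagonal gives the answer, show the rest is zero'' is an attempt to prove a false identity, and the entire substance of the proposition sits in the part you set aside.

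The tools you nominate for that part are also not the ones that make the computation close. The cross terms are not commutators of symmetries, so the relations \eqref{eq:Oijcomm}, together with moving $O_j$ past $O_{ij}$, do not reduce them to a form where the cancellation is visible. What the paper does instead is substitute every left-standing $O_{ij}$ by \eqref{eq:Oijl} and every right-standing one by \eqref{eq:Oijr}, so that everything is expressed through the Dunkl angular momenta $L_{ij}$, the $O_j$ written via \eqref{eq:Oj} in terms of the commutators $C_{kj}=[\mathcal{D}_k,x_j]$, and Clifford elements; the residual terms then vanish only because of the quadratic identity $L_{ij}L_{kl}+L_{ki}L_{jl}+L_{jk}L_{il} = L_{ij}C_{kl}+L_{ki}C_{jl}+L_{jk}C_{il}$ of~\cite[Thm~2.5]{Oste}. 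Without this identity (or an equivalent input about the Dunkl deformation) the calculation does not terminate. A smaller point: the dihedral geometry is not essential here. Although your observation that $O_3$ is proportional to $e_3$ while $O_1,O_2$ live in the $\{e_1,e_2\}$-plane is true, the proposition and the identity just quoted hold for any reflection group in any dimension, as the paper notes right after its proof; the dihedral structure only becomes essential later, in the representation-theoretic construction.
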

\begin{proof}
	Express $O_{123}^2$ as the product of the two expressions~\eqref{eq:O123l} and~\eqref{eq:O123r} 
\begin{align}
O_{123}^2 &=  (-\frac{\varepsilon}{2}e_1e_2e_3 - O_1e_2e_3 - \underline{O_2e_3e_1} - O_3 e_1e_2 + O_{12}e_3 + O_{31}e_2 + O_{23}e_1 ) \nonumber \\
	 &\quad \times ( -\frac{\varepsilon}{2}e_1e_2e_3 - e_2e_3O_1 - e_3e_1O_2 - e_1e_2O_3 + \underline{e_3O_{12}} + e_2O_{31} + e_1O_{23}\nonumber\\
&= -\frac{\varepsilon}{4} - O_1^2 - O_2^2 - O_3^2 + \varepsilon(O_{12}^2 + O_{31}^2 + O_{23}^2) + Q,
\end{align}
where $Q$ expresses the 42 remaining ``cross terms''. We show now that $Q = 2(O_1^2+O_2^2+O_3^2)$. Replace in $Q$ all instances of $O_{ij}$ on the left with~\eqref{eq:Oijl}, and all instances on the right by~\eqref{eq:Oijr}. For example, the terms below produce $2(O_1^2+O_2^2+O_3^2)$ (the underlined term comes from the two underlined terms in the product)
\begin{align*}
        A&= \varepsilon( (\underline{O_{2} e_1 O_{12}} - O_{12} e_1O_{2}) + (O_{1}e_3 O_{31} - O_{31}e_3O_{1}) + (O_{3}e_2O_{23} - O_{23}e_2O_3))\\
        &= \varepsilon \Big( \underline{O_2e_1L_{12} + \tfrac{1}{2} O_2 e_2 + \varepsilon O_2^2 - O_2 e_1e_2O_1} - L_{12} e_1 O_2+ \tfrac12 e_2 O_2 + O_1e_1e_2O_2 + \varepsilon O_2^2\\
        &\quad + O_1 e_3 L_{31} + \tfrac12 O_1 e_1 + \varepsilon O_1^2 + O_1 e_1e_3 O_3-  L_{31} e_3 O_1 + \tfrac12 e_1 O_1 + O_3 e_3e_1 O_1 + \varepsilon O_1^2\\
        &\quad + O_3 e_2 L_{23} +\tfrac12 O_2 e_3 + \varepsilon O_3^2 + O_3 e_3e_2 O_2 - L_{23}e_3O_3 + \tfrac12 e_3O_3 + O_2 e_2e_3O_3 + \varepsilon O_3^2\Big)\\
        &= 2\varepsilon^2(O_1^2+O_2^2+O_3^2) + B, \qquad \text{with $B$ the remaining part}.
\end{align*}
After doing this procedure for all terms, and further simplifications, one reaches
\begin{align*}
	Q&= 2(O_1^2+O_2^2 +O_3^2) \\
	&+ \frac{\varepsilon}{2}\left(\begin{smallmatrix}
	   	\ L_{12}e_1e_2(\varepsilon-e_3e_1 L_{31} - e_2e_3 L_{23}+2\varepsilon e_3O_3)\\
	   	+L_{31}e_3e_1(\varepsilon -e_1e_2L_{12} -e_2e_3L_{23} +2\varepsilon e_2O_2)\\
	   	+L_{23}e_2e_3(\varepsilon-e_1e_2L_{12} - e_3e_1L_{31} +2\varepsilon e_1O_1)
	   \end{smallmatrix} \right)
	   +\frac{\varepsilon}{2}
	   \left(\begin{smallmatrix}
	   	\ (\varepsilon-L_{31}e_3e_1 - L_{23}e_2e_3 + 2\varepsilon O_3e_3)e_1e_2L_{12}\\
	   	+ (\varepsilon-L_{12}e_1e_2 - L_{23}e_2e_3 + 2\varepsilon O_2e_2)e_3e_1L_{31}\\
	   	+ (\varepsilon-L_{12}e_1e_2 - L_{31}e_3e_1 + 2\varepsilon O_1e_1)e_2e_3L_{23}
	   \end{smallmatrix}\right).
\end{align*}
The last line is zero. To prove this, replace the $O_j$ by their last definition~\eqref{eq:Oj} in terms of commutators $C_{kj}:=[\mathcal{D}_k,x_j]$ and apply the following identity~\cite[Thm~2.5]{Oste}
\begin{equation}
	L_{ij}L_{kl} + L_{ki}L_{jl} + L_{jk}L_{il} = L_{ij}C_{kl} + L_{ki}C_{jl} + L_{jk}C_{il},
\end{equation}
keeping in mind that $L_{ii} = 0$, $L_{ij}= -L_{ji}$ and $C_{ij} = C_{ji}$. \hfill \qed
\end{proof}

This proposition yields in fact a correspondence between the Casimir of the Lie algebra $\mathfrak{osp}(1|2)$ and a central element in the symmetry algebra. Similar statements hold for any reflection group in any dimension, see~\cite{Oste4}.

The finite-dimensional representations are constructed via ladder operators. In the classical non-Dunkl case, the ladder operators for the $\mathfrak{so}(3)$ algebra are given by the following linear combinations of the two-index symmetries:
\begin{equation}\label{eq:O0OpOm}
O_0 := -i O_{12}, \quad O_+ := iO_{31} + O_{23}, \quad O_{-} := iO_{31} - O_{23}.
\end{equation}
For ease of notation, denote the following combination of one-index symmetries (note that they vanish when $\kappa =0$):
\begin{equation}\label{eq:T0TpTm}
T_0 := iO_3, \quad T_+ := O_1 + iO_2, \quad T_- := O_1- iO_2.
\end{equation}

\begin{proposition}\label{prop:commrelOpOm}
The commutation relations respected by $O_0$, $ O_+$ and $O_-$ are
\begin{equation}
\begin{aligned}
{[O_0,O_+]} &= +O_{+} + \{ O_{123}, T_+\} + \varepsilon [T_0,T_+],\\
[O_0,O_-] &= -O_{-} + \{ O_{123}, T_-\} - \varepsilon [T_0,T_-],\\
[O_0,O_+] &= \ 2O_0 - \{ O_{123}, T_0\} + \varepsilon [T_+,T_-],
\end{aligned}
\end{equation}
and those with $T_0$, $T_+$ and $T_-$ are
\begin{equation}
	\begin{aligned}
    T_0O_0 &=  O_0T_0, & T_0O_+ &= - O_+T_0, & T_0 O_- &= - O_-T_0, \\
    T_+ O_0 &= -O_0T_+, & T_+O_- &= -O_+T_-, & T_-O_+ &= O_-T_+, \\
    T_- O_0 &= -O_0 T_-, & T_- T_0&= - T_0T_-, & T_+ T_0&= -T_0 T_+. 
	\end{aligned}
\end{equation}

\end{proposition}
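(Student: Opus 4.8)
The plan is to split the nine identities into two groups that are proved by entirely different mechanisms. The three bracket relations for $O_0,O_+,O_-$ are a pure change of basis sitting on top of Proposition~\ref{prop:commrel}: since $O_0,O_\pm$ and $T_0,T_\pm$ are the linear combinations \eqref{eq:O0OpOm} and \eqref{eq:T0TpTm} of the two- and one-index symmetries, I would expand each commutator by bilinearity and substitute \eqref{eq:Oijcomm}. The remaining relations, those pairing a $T$ with an $O$ or two $T$'s, only involve elements of $\mathbb{C}[\widetilde{W}^\varepsilon]$ multiplied against two-index symmetries, so I would extract them from the $\widetilde{W}^\varepsilon$-relations \eqref{eq:commWO} together with the group law of the double cover.

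For the first group, take $[O_0,O_+]$ as the model. Bilinearity gives $[O_0,O_+]=[O_{12},O_{31}]+i[O_{23},O_{12}]$, and feeding in \eqref{eq:Oijcomm} turns this into $(O_{23}+iO_{31})+2(O_1+iO_2)O_{123}+\varepsilon([O_2,O_3]+i[O_3,O_1])$. The first parenthesis is exactly $O_+$; the last is $\varepsilon[T_0,T_+]$ after \eqref{eq:T0TpTm}; and for the middle term I would invoke the centrality of $O_{123}$ (Proposition~\ref{prop:commrel}) to rewrite $2T_+O_{123}$ as the symmetric $\{O_{123},T_+\}$. The other two brackets are handled identically, and every occurrence of $\varepsilon$ simply rides along from \eqref{eq:Oijcomm}, the only place $\varepsilon$ enters this group.

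For the second group, the preparatory step is to read off from \eqref{eq:Oj} that $O_3=\kappa(\alpha_0)\widetilde{\sigma}_0$ is a single reflection, while $T_\pm=\mp i\sum_{k=1}^{m}\kappa(\alpha_k)e^{\pm i k\pi/m}\widetilde{\sigma}_{\alpha_k}$ once the sines and cosines hidden in $O_1,O_2$ are rewritten through complex exponentials. I would then recast \eqref{eq:commWO} in the ladder basis: $\widetilde{\sigma}_0$ fixes $O_{12}$ and negates $O_{31},O_{23}$, hence negates $O_\pm$, whereas each planar reflection interchanges the raising and lowering combinations up to a phase, $\widetilde{\sigma}_{\alpha_k}O_\pm=e^{\pm 2i k\pi/m}O_\mp\widetilde{\sigma}_{\alpha_k}$. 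The relations involving $T_0$ and the relations $T_\pm O_0=-O_0T_\pm$ are then immediate, since $T_0$ is a scalar multiple of $\widetilde{\sigma}_0$ and each $\widetilde{\sigma}_{\alpha_k}$ anticommutes with $O_{12}$. For the mixing relations $T_\pm O_\mp$ the phase $e^{\pm i k\pi/m}$ carried by $T_\pm$ combines with the phase $e^{\mp 2i k\pi/m}$ produced when $\widetilde{\sigma}_{\alpha_k}$ is moved past $O_\mp$, leaving $e^{\mp i k\pi/m}$, which is precisely the phase that reassembles the sum into $T_\mp$ and reproduces the stated identities. Finally the two relations among the $T$'s follow from $\widetilde{\sigma}_0\widetilde{\sigma}_{\alpha_k}=-\widetilde{\sigma}_{\alpha_k}\widetilde{\sigma}_0$ in $\widetilde{W}^\varepsilon$, which holds because $e_3$ anticommutes with $e_1$ and $e_2$.

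I expect the mixing relations $T_\pm O_\mp$ to be the main obstacle, for two reasons. First, passing from \eqref{eq:commWO} to the $O_\pm$ basis must be done so that a planar reflection acts orientation-reversingly on the pair $(O_{31},O_{23})$; otherwise $O_+$ and $O_-$ are not cleanly interchanged and the phase description fails. Second, the cancellation $e^{\pm i k\pi/m}\,e^{\mp 2i k\pi/m}=e^{\mp i k\pi/m}$ has to hold term by term in the sum over $k$, not merely after summing, and it is exactly this per-term identity that lets $T_\pm O_\mp$ collapse onto $O_\pm T_\mp$. A secondary, purely bookkeeping difficulty is tracking the cocycle signs of $\widetilde{W}^\varepsilon$ when reordering reflections; these relations nonetheless turn out to be independent of $\varepsilon$, because only the anticommutation of distinct Clifford generators, never their squares, intervenes.
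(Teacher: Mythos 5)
Your route is the same as the paper's, whose entire proof is the one-line instruction to use Proposition~\ref{prop:commrel}: bilinearity plus \eqref{eq:Oijcomm} and centrality of $O_{123}$ for the three brackets, and \eqref{eq:commWO} together with the explicit group-algebra expressions of the one-index symmetries for the remaining relations. Your execution of the first group is correct, and your preparatory identifications $O_3=\kappa(\alpha_0)\widetilde{\sigma}_0$, $T_{\pm}=\mp i\sum_{k=1}^{m}\kappa(\alpha_k)e^{\pm ik\pi/m}\widetilde{\sigma}_{\alpha_k}$ and $\widetilde{\sigma}_{\alpha_k}O_{\pm}=e^{\pm 2ik\pi/m}O_{\mp}\widetilde{\sigma}_{\alpha_k}$ are exactly the bookkeeping the paper leaves implicit; your remark that the passage to the $O_\pm$ basis requires $\widetilde{\sigma}_{\alpha_k}$ to act orientation-reversingly on $(O_{31},O_{23})$ is well taken, since \eqref{eq:commWO} as printed is not even compatible with $\widetilde{\sigma}_{\alpha_k}^2=\varepsilon$ and must be corrected before your phase description applies.

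The gap is in the mixing relations, where you assert that the phase cancellation ``reproduces the stated identities''. It does not: with your own conventions,
\begin{equation*}
T_-O_+ \;=\; i\sum_{k=1}^{m}\kappa(\alpha_k)e^{-ik\pi/m}\,\widetilde{\sigma}_{\alpha_k}O_+ \;=\; i\sum_{k=1}^{m}\kappa(\alpha_k)e^{ik\pi/m}\,O_-\widetilde{\sigma}_{\alpha_k} \;=\; -\,O_-T_+,
\end{equation*}
whereas the proposition prints $T_-O_+=+O_-T_+$. This cannot be repaired by adjusting phase conventions: writing $\widetilde{\sigma}_{\alpha_k}O_{\pm}=\nu_k^{\pm}O_{\mp}\widetilde{\sigma}_{\alpha_k}$ and conjugating twice forces $\nu_k^{+}\nu_k^{-}=1$ because $\widetilde{\sigma}_{\alpha_k}^{2}=\varepsilon$ is a scalar, so the two mixing relations necessarily carry the \emph{same} sign; the pair your computation actually proves is $T_{\pm}O_{\mp}=-O_{\pm}T_{\mp}$. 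In other words the printed statement contains a sign typo, just as its third bracket relation should have left-hand side $[O_+,O_-]$ and should read $[O_+,O_-]=2O_0-2\{O_{123},T_0\}+\varepsilon[T_+,T_-]$ --- your ``handled identically'' expansion, if carried out, produces the coefficient $2$, not what is printed. The method is the right one and your phases are right, but a blind proof must either obtain the claimed identities or flag them as erroneous; asserting agreement where your own algebra yields a disagreement is the step that fails.
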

\begin{proof}
Use the commutation relations of Proposition~\ref{prop:commrel}. \hfil \qed
\end{proof}

In this new basis, the following expressions hold.
\begin{proposition}\label{prop:factoOpOm}
The square of the three-index symmetry becomes
\begin{align}
O_{123}^2 &= -\frac{\varepsilon}{4} + T_+T_- - T_0^2 - \varepsilon( O_0^2 - O_0 + O_+O_- + 2O_{123}T_0),\label{eq:sq:O123inOpOm}\\
&= -\frac{\varepsilon}{4} + T_-T_+ - T_0^2 - \varepsilon( O_0^2 + O_0 - O_-O_+ - 2O_{123}T_0).\label{eq:sq:O123inOmOp}
\end{align}
Furthermore, the following equations hold
\begin{align}
O_+O_- &=  \varepsilon T_+T_- - (O_0 - 1/2)^2 - \varepsilon(\varepsilon O_{123} + T_0)^2, \label{eq:facOpOm}\\
O_-O_+ &=  \varepsilon T_-T_+ - (O_0 + 1/2)^2 - \varepsilon(\varepsilon O_{123} - T_0)^2\label{eq:facOmOp}.
\end{align}
\end{proposition}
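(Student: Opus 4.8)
The plan is to push Proposition~\ref{prop:O123sq} through the ladder basis and then complete squares. First I would rewrite each quadratic block of $O_{123}^2 = -\frac{\varepsilon}{4} + O_1^2 + O_2^2 + O_3^2 + \varepsilon(O_{12}^2 + O_{31}^2 + O_{23}^2)$ using the definitions \eqref{eq:O0OpOm}--\eqref{eq:T0TpTm}. A direct expansion gives $T_0^2 = -O_3^2$ and $O_{12}^2 = -O_0^2$, together with $T_+T_- = O_1^2 + O_2^2 - i[O_1,O_2]$ and $O_+O_- = -(O_{31}^2 + O_{23}^2) - i[O_{31},O_{23}]$, as well as the opposite-order variants $T_-T_+ = O_1^2 + O_2^2 + i[O_1,O_2]$ and $O_-O_+ = -(O_{31}^2 + O_{23}^2) + i[O_{31},O_{23}]$. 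Substituting the first pair into Proposition~\ref{prop:O123sq} yields an expression adapted to $O_+O_-$, and substituting the second pair yields one adapted to $O_-O_+$; each still carries a pair of leftover commutators of the form $\pm i[O_1,O_2] \mp \varepsilon i[O_{31},O_{23}]$.

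The key step is to clear these commutators with the structure relation $[O_{31},O_{23}] = O_{12} + 2O_3O_{123} + \varepsilon[O_1,O_2]$ from Proposition~\ref{prop:commrel}. Because $\varepsilon^2 = 1$, the $\varepsilon^2 i[O_1,O_2]$ produced by this relation cancels precisely the $i[O_1,O_2]$ carried over from the $T$-block, leaving only $\mp \varepsilon i O_{12}$ and $\mp 2\varepsilon i O_3 O_{123}$. Rewriting these with $iO_{12} = -O_0$ and $iO_3 = T_0$, and using the centrality of $O_{123}$ (also Proposition~\ref{prop:commrel}) to write $O_3 O_{123} = O_{123}O_3$, converts them into the linear terms $\pm \varepsilon O_0 \mp 2\varepsilon O_{123}T_0$. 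This produces \eqref{eq:sq:O123inOpOm} and \eqref{eq:sq:O123inOmOp}.

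Finally, to obtain the factorisations I would solve \eqref{eq:sq:O123inOpOm} for $O_+O_-$ and \eqref{eq:sq:O123inOmOp} for $O_-O_+$, multiply through by $\varepsilon$ (using $\varepsilon^2 = 1$ again), and complete squares. The terms $-O_0^2 \pm O_0 - \tfrac14$ assemble into $-(O_0 \mp \tfrac12)^2$, while $-\varepsilon O_{123}^2 \mp 2O_{123}T_0 - \varepsilon T_0^2$ assemble into $-\varepsilon(\varepsilon O_{123} \pm T_0)^2$; the cross term of the latter square appears with the correct coefficient precisely because $\varepsilon^2 = 1$ and $O_{123}$ commutes with $T_0$. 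This yields \eqref{eq:facOpOm} and \eqref{eq:facOmOp}.

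The only genuine difficulty is sign bookkeeping, since two independent signs interact throughout: the Clifford signature $\varepsilon$ and the factors of $i$ coming from the complex ladder combinations. The whole argument rests on the clean cancellation of $[O_1,O_2]$ between the $T$-block and the structure relation for $[O_{31},O_{23}]$: it is exactly this cancellation that collapses a quadratic-plus-commutator expression into a quadratic-plus-linear one, and so makes the final completion of squares --- and the appearance of the shifted squares $(O_0 \mp \tfrac12)^2$ and $(\varepsilon O_{123} \pm T_0)^2$ --- possible.
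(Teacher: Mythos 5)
Your proposal follows essentially the same route as the paper's own proof: the paper likewise rewrites each quadratic block of Proposition~\ref{prop:O123sq} through the definitions~\eqref{eq:O0OpOm}--\eqref{eq:T0TpTm}, where its key identity $O_{31}^2+O_{23}^2 = -O_+O_- + O_0 - 2O_{123}T_0 + \tfrac{\varepsilon}{2}[T_+,T_-]$ is exactly your substitution of the structure relation for $[O_{31},O_{23}]$ packaged with $i[O_1,O_2]=-\tfrac12[T_+,T_-]$; the commutator terms then cancel because $\varepsilon^2=1$, and the factorisations follow by solving and completing squares. Your computations are correct (incidentally, your $T_0^2=-O_3^2$ is right, while the paper's proof display misprints it as $O_3^2=T_0^2$).

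There is, however, one discrepancy you should flag rather than gloss over. Carried out exactly as you describe, your derivation of the second square formula produces
\begin{equation*}
O_{123}^2 = -\frac{\varepsilon}{4} + T_-T_+ - T_0^2 - \varepsilon\bigl( O_0^2 + O_0 + O_-O_+ - 2O_{123}T_0\bigr),
\end{equation*}
that is, with $+O_-O_+$ inside the parenthesis, whereas the proposition as printed in~\eqref{eq:sq:O123inOmOp} has $-O_-O_+$. The two are not equivalent, and it is your version that is correct: solving the printed~\eqref{eq:sq:O123inOmOp} for $O_-O_+$ gives the \emph{negative} of~\eqref{eq:facOmOp}, and adding~\eqref{eq:sq:O123inOpOm} to the printed~\eqref{eq:sq:O123inOmOp} fails to reproduce Proposition~\ref{prop:O123sq}, while your version passes both consistency checks and is the one from which the ladder factorisation~\eqref{eq:LmLp} follows. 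In other words, the printed~\eqref{eq:sq:O123inOmOp} contains a sign typo; your algebra silently corrects it, and your completion of squares (which uses the $+O_-O_+$ version) then yields~\eqref{eq:facOpOm} and~\eqref{eq:facOmOp} exactly as stated. You should say this explicitly instead of claiming that your derivation ``produces~\eqref{eq:sq:O123inOmOp}'' verbatim, since your own (correct) computation contradicts that equation as printed.
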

\begin{proof}
We prove~\eqref{eq:sq:O123inOpOm} by directly rewriting from the definitions~\eqref{eq:O0OpOm} and~\eqref{eq:T0TpTm}:
\begin{align*}
O_{12}^2 &=  -O_0^2, & O_{31}^2 + O_{23}^2 &= - O_+O_- + O_0 - 2O_{123}T_0 + \frac{\varepsilon}{2} [T_+,T_-],\\
O_3^2 &=  T_0^2, & O_1^2+O_2^2 &= T_+T_-  -\frac{1}{2} [T_+,T_-].
\end{align*}
Equation~\eqref{eq:sq:O123inOmOp} is similar, and the expressions~\eqref{eq:facOpOm} and~\eqref{eq:facOmOp} follow directly.\hfill \qed
\end{proof}

\begin{proposition}\label{prop:lad}
The following operators 
\begin{equation}\label{eq:lad}
L_+ := \frac{1}{2} \{ O_0, O_+\} \qquad \text{and} \qquad L_- := \frac{1}{2} \{O_0, O_-\}
\end{equation}
are ladder operators with respect to $O_0$ in the sense that
\begin{equation}
{[ O_0, L_+]} = + L_+, \qquad [O_0, L_-] = - L_-,
\end{equation}
and the products of two of them admit the following factorisations
\begin{align}
L_+L_- &= -((O_0 - 1/2)^2 + \varepsilon (\varepsilon O_{123} + T_0)^2)((O_0 - 1/2)^2 - \varepsilon T_+T_-),\label{eq:LpLm}\\
L_-L_+ &= -((O_0 + 1/2)^2 + \varepsilon (\varepsilon O_{123} - T_0)^2)((O_0 + 1/2)^2 - \varepsilon T_-T_+).\label{eq:LmLp}
\end{align}
\end{proposition}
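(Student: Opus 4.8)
The plan is to establish the two assertions in turn: first the ladder property $[O_0,L_\pm]=\pm L_\pm$, then the two factorisations~\eqref{eq:LpLm} and~\eqref{eq:LmLp}. For the ladder property I would argue purely algebraically from Proposition~\ref{prop:commrelOpOm}. Since $L_+ = \tfrac12\{O_0,O_+\}$, I compute $[O_0,L_+] = \tfrac12[O_0,\{O_0,O_+\}] = \tfrac12\{O_0,[O_0,O_+]\}$ using the derivation identity for the commutator acting on an anticommutator together with $[O_0,O_0]=0$. The relation $[O_0,O_+]=O_+ + \{O_{123},T_+\} + \varepsilon[T_0,T_+]$ then feeds in, so I must check that $\tfrac12\{O_0,O_+ + \{O_{123},T_+\}+\varepsilon[T_0,T_+]\}$ collapses to $L_+$. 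This requires that $O_0$ \emph{anticommutes} with the correction terms $\{O_{123},T_+\}$ and $[T_0,T_+]$, which I expect to follow from the $\widetilde{W}^\varepsilon$-interaction relations in Proposition~\ref{prop:commrelOpOm} (recall $O_0=-iO_{12}$ and the $T$'s are built from one-index symmetries tied to reflections): since $T_+O_0=-O_0T_+$ and $T_0$ anticommutes with $O_0$ as well, the full bracket $\tfrac12\{O_0,\text{corrections}\}$ should vanish, leaving exactly $L_+=\tfrac12\{O_0,O_+\}$. The analogous computation gives $[O_0,L_-]=-L_-$.

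For the factorisations I would reduce everything to the already-proved factorised forms~\eqref{eq:facOpOm} and~\eqref{eq:facOmOp} of Proposition~\ref{prop:factoOpOm}. The idea is to expand $L_+L_- = \tfrac14\{O_0,O_+\}\{O_0,O_-\}$ and push all $O_0$'s to one side using the ladder property just established, so that the product reorganises into a polynomial in $O_0$ multiplying the building blocks $O_+O_-$ (or $O_-O_+$) and $T_+T_-$. Concretely, the ladder relations let me write $O_0O_+ = O_+(O_0+1)$ and $O_0O_- = O_-(O_0-1)$ up to the correction terms, so that the anticommutators telescope; after normal-ordering, $L_+L_-$ should become a product of a factor depending only on $O_0$, $O_{123}$, $T_0$ and a factor of the form $(O_0-1/2)^2 - \varepsilon T_+T_-$. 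At that point I substitute the factorised expression for $O_+O_-$ from~\eqref{eq:facOpOm}, which already contains the term $-(O_0-1/2)^2 - \varepsilon(\varepsilon O_{123}+T_0)^2$, and verify that the two factors on the right-hand side of~\eqref{eq:LpLm} emerge exactly.

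The main obstacle will be the bookkeeping in the second part: the anticommutators $\{O_0,O_+\}$ do not simply factor as commutative products, and the correction terms $\{O_{123},T_+\}$, $[T_0,T_+\}$ carry nontrivial commutation with $O_0$ and among themselves. I must track the signs coming from $\varepsilon$ and from the anticommutation of the $T$'s and $e_j$'s with $O_0$ very carefully, since a single sign error propagates into the final factorisation. The cleanest route is probably to first prove an intermediate identity expressing $L_+L_-$ as $(O_0-1/2)O_+(O_0-1/2)O_-$-type reordering, or equivalently to show $L_\pm = (O_0\mp 1/2)O_\pm$ when acting appropriately, reducing $L_+L_-$ to $(O_0-1/2)\,O_+O_-\,(O_0-1/2)$ after moving $O_0$ through $O_-$ via the ladder relation; then~\eqref{eq:facOpOm} finishes the computation. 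The derivation of~\eqref{eq:LmLp} is entirely parallel, swapping the roles of $O_+$ and $O_-$ and using~\eqref{eq:facOmOp} in place of~\eqref{eq:facOpOm}, with the sign of the $O_0$-shift reversed.
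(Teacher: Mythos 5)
Your first half follows the paper's own argument: write $[O_0,L_\pm]=\tfrac12[O_0,\{O_0,O_\pm\}]=\tfrac12\{O_0,[O_0,O_\pm]\}$, insert the relations of Proposition~\ref{prop:commrelOpOm}, and kill the correction terms inside the anticommutator with $O_0$. However, your justification misquotes one relation: by Proposition~\ref{prop:commrelOpOm}, $T_0$ \emph{commutes} with $O_0$ ($T_0O_0=O_0T_0$); it does not anticommute. This matters: $\{O_0,[T_0,T_+]\}$ vanishes precisely because $[T_0,T_+]$ is a product of a factor commuting with $O_0$ (namely $T_0$, just as the central $O_{123}$ in $\{O_{123},T_+\}$) and a factor anticommuting with $O_0$ (namely $T_+$), hence anticommutes with $O_0$. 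If $T_0$ and $T_+$ both anticommuted with $O_0$, as you assert, then $[T_0,T_+]$ would \emph{commute} with $O_0$ and $\{O_0,[T_0,T_+]\}=2O_0[T_0,T_+]$ would not vanish; your argument as written would fail. The fix is just to quote the relation correctly.

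The second half has a genuine gap. The intermediate identity you call the cleanest route, $L_\pm=(O_0\mp 1/2)O_\pm$, is false: from $[O_0,O_+]=O_+ +\{O_{123},T_+\}+\varepsilon[T_0,T_+]$ one gets $L_+=(O_0-\tfrac12)O_+-\tfrac12 M_+$ with $M_+:=\{O_{123},T_+\}+\varepsilon[T_0,T_+]=2(O_{123}+\varepsilon T_0)T_+$, which is nonzero whenever $\kappa\neq 0$. If you drop $M_\pm$, then (since $O_+O_-$ commutes with $O_0$, as is visible from~\eqref{eq:facOpOm}) you obtain only $L_+L_-=O_+O_-(O_0-\tfrac12)^2$, and substituting~\eqref{eq:facOpOm} gives a result that falls short of the right-hand side of~\eqref{eq:LpLm} by exactly the term $(\varepsilon O_{123}+T_0)^2\,T_+T_-$. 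That missing term is produced entirely by the pieces you hope to discard: using $T_+O_-=-O_+T_-$, $T_\pm T_0=-T_0T_\pm$ and centrality of $O_{123}$, the cross terms contribute $2(\varepsilon O_{123}+T_0)^2T_+T_-$ and the term $-\tfrac14 M_+M_-$ contributes $-(\varepsilon O_{123}+T_0)^2T_+T_-$, which together supply precisely the deficit. So your overall strategy (normal-order $\tfrac14\{O_0,O_+\}\{O_0,O_-\}$ and feed in Proposition~\ref{prop:factoOpOm}) does work, but the corrections are not bookkeeping noise to be telescoped away; they are where part of the answer comes from, and any derivation must carry them through. For comparison, the paper proves the ladder property exactly as you do (with the correct commutation of $T_0$) and does not rederive the factorisations at all, citing instead the $\varepsilon=+1$ computation of the earlier work with the $\varepsilon$-modified relations.
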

\begin{proof}
That $L_+$ and $L_-$ are ladder operators comes from Proposition~\ref{prop:commrelOpOm}
\begin{align*}
{2[O_0, L_{\pm}]} &= [O_0, \{ O_0, O_{\pm}\}] = \{ O_0, [O_0, O_{\pm}]\} \\
&= \{O_0, \pm O_{\pm} + \{O_{123},T_\pm\} \pm \varepsilon[T_0,T_{\pm}]\} = \pm \{O_0,O_{\pm}\} = 2L_{\pm} ,
\end{align*}
where equation~\eqref{eq:commWO} was used in the second line. The proof of the factorisation is the same as~\cite[Prop~3.8]{DBLROVdJ} using the $\varepsilon$ variants of the commutation relations.\hfill \qed
\end{proof}

\section{Sketch of the finite-dimensional representations construction}
\label{sec:3}

Everything needed for the construction of the finite-dimensional representations is in place. Doing it would, however, greatly exceed the scope of this note. We give below a sketch of the steps needed and refer the readers to~\cite{DBLROVdJ} for the details. 

\begin{enumerate}
\item Any finite-dimensional $\mathfrak{SA}_{m}^{\varepsilon}$-representation decomposes as a $\widetilde{W}^{\varepsilon}$-representation into a direct sum of spin irreducible $\widetilde{W}^{\varepsilon}$-representations by Maschke's Theorem (the irreducible representations for these groups can be found in~\cite[Thm~A.5]{DBLROVdJ}). Let $\widetilde{W}_0^{\varepsilon}$ be the subgroup of $\widetilde{W}^{\varepsilon}$ generated by elements commuting with $O_0$. The associative subalgebra of $\mathfrak{SA}_{m}^{\varepsilon}$ generated by  $O_0$, $L_+$, $L_-$, $O_{123}$ and $\widetilde{W}^{\varepsilon}_0$ has a triangular decomposition. Use this triangular decomposition and the ladder operators to give a basis of $O_0$- and $O_{123}$-eigenvectors for any irreducible $\mathfrak{SA}_{m}^{\varepsilon}ˇ$-representation. (See~\cite[Lem.~4.3]{DBLROVdJ}.)
\item Thus start from a general $O_0$- and $O_{123}$-eigenbasis. The elements $v_j^+$ and $v_j^-$ of this basis are obtained from multiple applications of the ladder operators on a first pair $v_0^+,v_0^-$. Use the two factorisations~\eqref{eq:LpLm} and~\eqref{eq:LmLp} to create equations $L_+v_j^- = A(j)v_{j+1}^-$ and $L_-v_j^+= A(j)v_{j+1}^+$.  The terms $A(j)$ will depend on the first $\widetilde{W}^{\varepsilon}$-representation, and on the eigenvalues of $O_{123}$ and $O_0$. Then irreducibility and the finite-dimension give conditions on $A(j)$. (See~\cite[(4.21)--(4.23)]{DBLROVdJ}.)
\item Solve the system obtained for the values of the $O_{123}$- and $O_{0}$-eigenvalues keeping track of the conditions on $\kappa$. (See~\cite[(4.28)]{DBLROVdJ}.)
\item Furthermore, the unitarity of the representations can be studied in the same fashion by looking at positivity constraints in the $A(j)$. (See~\cite[Sec.~3.3 and Lem.~4.4]{DBLROVdJ}.)
\end{enumerate}

%
%\section{Realisation as Dunkl monogenics}
%\label{sec:4}
%
%\alr{realise it as Dunkl Monogenics for $\varepsilon$}
%
%\begin{theorem}[CK for $\varepsilon$]
%	Do the CK with general $\varepsilon$.
%\end{theorem}
%
%\begin{theorem}[Polynomial monogenics as realisation of the algebra]
%	
%\end{theorem}

\begin{acknowledgement}
We wish to thank Hendrik De Bie, Roy Oste and Joris Van der Jeugt for helpful discussions and support.
%Scholarship and project
ALR holds scholarships from the Fonds de recherche Nature et technologies (Qu\'ebec) 270527 and the Excellence of Science Research Project 30889451. This support is gratefully acknowledged. 
\end{acknowledgement}

\end{document}